\documentclass[12pt]{article}
\usepackage{geometry}                
\usepackage{graphicx}
\usepackage{amssymb,amsmath,bm}
\usepackage{amsthm}
\usepackage[usenames]{color}
\usepackage{epstopdf}
\DeclareGraphicsRule{.tif}{png}{.png}{`convert #1 `dirname #1`/`basename #1 .tif`.png}

\marginparwidth 0pt

\oddsidemargin -.5cm

\evensidemargin -.5cm

\marginparsep 0pt

\textwidth 17cm

\renewcommand{\P}{\ensuremath{\mathbb{P}}}
\newcommand{\B}{Bezout}

\def\K{\mathbb{K}}

\def\bez{\mathbf{Bez}}
\def\F{\mathbf{F}}
\def\T{\mathbf{T}}

\def\adots{\mathinner{\mkern2mu\raise1pt\hbox{.}\mkern2mu
\raise4pt\hbox{.}\mkern2mu\raise7pt\hbox{.}\mkern1mu}}

\newcommand \junk[1]{}

\newtheorem{lemma}{Lemma}
\newtheorem{theorem}{Theorem}
\newtheorem{prop}{Proposition}
\newtheorem{defn}{Definition}
\newtheorem{example}{Example}
\newtheorem{cor}{Corollary}

\title{The null space of the Bezout matrix in any basis and gcd's}

\author{Gema Mar\'{\i}a D\'{\i}az-Toca
\footnote{Departamento de Matem\'atica Aplicada, Universidad de  Murcia, Spain. \texttt{gemadiaz@um.es}}\and Mario Fioravanti  \footnote{Departamento de  Matem\'aticas, Estad\'istica y Computaci\'on, Universidad de Cantabria, Spain.  \texttt{mario.fioravanti@unican.es}}}

\date{}

\begin{document}
\maketitle

\begin{abstract}
This manuscript  presents a generalization of the structure of the null space of the Bezout matrix in the monomial basis, see \cite{HeinigRost}, to an arbitrary basis. In addition, two methods for computing the gcd of several polynomials, using also Bezout matrices, without having to convert them to the monomial basis. The main point is that the presented results are expressed with respect to an arbitrary polynomial basis. In recent years, many problems in polynomial systems, stability theory, CAGD, etc., are solved using Bezout matrices in distinct specific bases. Therefore, it is very useful to have results and tools that can be applied to any basis.
\end{abstract}

\section{Introduction}\label{intro}

The Bezout matrix of two polynomials is a symmetric matrix whose generator polynomial was introduced by E. Bezout (1764). 
It was used by  J. J. Sylvester (1853) and C. Hermite (1856) in the context of stability theory.
It was also used by  A. Cayley (1848), who claimed the relation between the resultant and the determinant of the Bezout matrix.
In the late twentieth century, the Bezout matrix regained importance in the context of polynomial algebra, appearing in many books and research articles. One reason for this importance is its relations with some structured matrices, such as Hankel, Toeplitz, and Vandermonde matrices \cite{HeinigRost}, \cite{LT}.
It is a useful tool for the study of the location of zeros of real and complex polynomials, with recent applications to the study of the topology and geometric operations on curves and surfaces, within the area of CAGD (see \cite{DBLP:conf/alenex/BerberichES11}, \cite{D-TFG-VS:2012}, \cite{shakoori:2007}).
The Bezout matrix has many applications in system theory, stability theory of an $n$th order homogeneous linear differential equation, elimination theory, numerical computing and control theory (see \cite{barnett:83}, \cite{HF}).

In recent years, the number of articles on applications of the Bezout matrix to the solution of different problems has increased rapidly. Most of this applications work in a specific polynomial basis, appropriate to the context of the problem. For instance, many many applications of the Bezout matrix in the area of CAGD, use the Bernstein basis. This motivates the interest in the study of the Bezout matrix in this basis (see \cite{BernsteinBez}). In other cases, the data for the problem is given by interpolation values, so it is convenient to use the Lagrange basis and deal with Bezout matrices in this basis (see \cite{Joua}, \cite{Shakoori:BML:2004}, \cite{D-TFG-VS:2012}). The Bezout matrix for Chebyshev polynomials is analyzed in \cite{ChebyBez}. Moreover, it is known that changing from one basis to the monomial basis, and after doing some computations changing back to the initial basis, is a numerically unstable procedure.
Thus, it is good to have a set of theorems that hold for any basis.
The purpose of this paper is to present in a unified manner some relevant results on Bezout matrices. Some of this results are new and others are known, but in all cases, we give the general (arbitrary) basis version.

The paper is organized in the following way. In Section~\ref{SecBez} we recall the general definition of the Bezout matrix and some important properties. In Section~\ref{seccion3} we study the structure of the null space of the Bezout matrix, starting with a result of Heinig and Rost \cite{HeinigRost}, and giving afterwards the general basis version.
In Sections~\ref{barnettsect} and \ref{sectionnullspacesgcd} we present two different methods for computing the gcd of several polynomials. The first one uses a theorem of Barnett \cite{Barnett}, and the second one uses Bezout matrices. Finally, we give some conclusions.

\paragraph{Basic notation:} In this paper, matrices are denoted by bold letters. The transpose of a matrix $\bf{A}$ is denoted by ${\bf A}^T.$ The notation $P(t)\in \mathbb{K}[t]$ indicates a univariate polynomial in the variable $t$ with coefficients from $\mathbb{K}$, where $\mathbb{K}$ is a field of characteristic zero, usually  $\mathbb{Q}$, $\mathbb{R}$ or $\mathbb{C}$.
  The vector space of the polynomials of degree at most
  $n$ is denoted by $\P_{n},$ and $\P_{m,n}$ denotes the space of bivariate polynomials of degrees $m$ and $n$. Finally, $\mathrm{St_n}(t)=\{1,t,\ldots,t^{n-1},t^n\}$ denotes the monomial Basis of $\P_{n}$; when the degree can be omitted, we will simply write $\mathrm{St}(t)$.


 \section{Definition of the Bezout Matrix}\label{SecBez}


The classical definition of the Bezout matrix makes explicit reference to the monomial basis, and it can be found, together with its main properties, in the books \cite{LT}, \cite{mignotte}, \cite{BiniPan:1994}, \cite{FR:1996} and \cite{HeinigRost}, or in the article \cite{HF}. Nevertheless, the definition of the \B\ matrix makes use of the so-called Cayley quotient, which makes no reference to any particular basis in which the given polynomials are represented.
Indeed, in \cite{MFbook, Yang:2001} one can find the following general definition.
\begin{defn}  \label{def:cayleyUnivariate}
  Let $P(t),Q(t)$ be two polynomials with $n=\max\{\deg(P(t)),\deg(Q(t))\}$.
  The Cayley quotient of $P(t)$ and $Q(t)$ is the polynomial
  $C_{p,q}$ of degree at most $n-1$ defined by
  \begin{equation}\label{cayleyUnivariate}
C_{p,q}(t,x)={\displaystyle{\frac{P(t)Q(x)-P(x)Q(t)}{t-x}}}
\end{equation}
Thus, if $\bm{\Phi}(t) = \{\phi_0(t),\dots ,\phi_{n-1}(t)\}$ is a basis for $\P_{n-1}$ then $C_{p,q}$ can be uniquely written as
\begin{equation} \label{BezPhi}
C_{p,q}(t,x)=\sum_{i,j=0}^{n-1} b_{ij} \phi_i(t) \phi_j(x)= \left( \phi_0(t) ,  \ldots,\phi_{n-1}(t)  \right) (b_{ij}) \left(\begin{array}{c} \phi_0(x)\\  \vdots\\ \phi_{n-1}(x) \end{array} \right).
\end{equation}
The symmetric matrix  $\bez_{\bm{\Phi}}(P,Q)=(b_{ij})$ is called the Bezout matrix or the Bezoutian in the polynomial basis $\bm{\Phi}(t).$

\end{defn}
\noindent When the basis is clear or can be omitted, we will denote $\bez(P,Q)$.
Observe that although $C_{p,q}$ is a
rational function of $x$ and $t$, the numerator vanishes if $x=t$;
as such $t-x$ divides $P(t)Q(x)-P(x)Q(t)$ and
the Cayley quotient $C_{p,q}$ is a bivariate polynomial in
$\P_{n-1,n-1}$ as stated.

 \medskip
 Obviously, Bezout matrices associated to different basis are congruent. That is, given two distinct basis of $\P_{n-1}$,  $\bm{\Phi}(t)$ and  $\bm{\Psi}(t)$, and the transformation matrix $\mathbf{P}_{\Psi\rightarrow\Phi}$ between $\bm{\Psi}(t)$ and  $\bm{\Phi}(t)$ such that
$$
\left( \phi_0(t) ,  \ldots,\phi_{n-1}(t)  \right)   \mathbf{P}_{\Psi\rightarrow\Phi} = \left(  \psi_0(t),  \ldots , \psi_{n-1}(t)  \right),
$$
 then $$\bez_\Phi(P,Q)=\mathbf{P}_{\Psi\rightarrow\Phi}  \bez_\Psi(P,Q) \mathbf{P}_{\Psi\rightarrow\Phi}^t .$$

 Some of the most known properties of the Bezout matrix can be found, for example, in \cite{BiniPan:1994} and \cite{HF}

\section{The null space of the Bezout Matrix} \label{seccion3}

In this section, we generalize to an arbitrary basis a known result on the structure of the null space of the Bezout matrix in the monomial basis. 
 Hereafter we assume that the polynomials $P(t)$ and $Q(t)$ are neither null nor proportional; otherwise, the Bezout matrix would be the null matrix, and so its null space would be equal to $\K^n$.

\subsection{The null space of $\mathbf{ {Bez}_{\mathrm{St}}(P,Q)}$}\label{31}

In \cite{HeinigRost}, G. Heinig and K. Rost describe the structure of the null space of the Bezout Matrix in the monomial Basis. The null space of the Bezout matrix has an elegant structure that can be used to determine the common roots of the given polynomials (for a proof see~\cite{HeinigRost}, page 42).

\begin{theorem}\label{kernelmonomial} The null space of $\mathbf{ Bez}_{\mathrm{St}}(P,Q)$ is spanned by the columns of the matrix
\begin{equation}
\left(X_1, X_2, \ldots, X_k \right)
\label{eq:nullspace}
\end{equation}
where each block $X_j$ corresponds to a different common root of
$P(t)$ and $Q(t)$. The dimension of each block is the geometric
multiplicity~$k_j$ of the common root~$x_j$ (i.e., its multiplicity
as a root of the greatest common divisor of $P(t)$ and $Q(t)$).
Moreover each block can be parameterized by the common root~$x_j$
in the form
\begin{equation}\label{eq:paramshape}
X_j=\left( \begin{array}{cccccc}
1 & 0 & 0 & \ldots  & & 0 \\
x_j & 1 & 0 &    \ldots      & & 0 \\
x_j^2 & 2x_j & 2 &\ldots  & & 0 \\
   \vdots & \vdots & \vdots & \vdots& & \vdots \\
   x_j^{n-1} &(n-1)x_j^{n-2} & (n-1)(n-2)x_j^{n-3}&\ldots&& (n-1)^{\underline{k_j-1}}\, x_j^{n-k_j} \\
\end{array}
\right)
\end{equation}
where
$n^{\underline{k_j}} = n(n-1) \cdots(n-k_j+1)$.
\end{theorem}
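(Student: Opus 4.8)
The plan is to read membership in the kernel off the defining Cayley identity. Writing $B=\bez_{\mathrm{St}}(P,Q)$ and $\vec{s}(x)=(1,x,\dots,x^{n-1})^T$, equation~\eqref{BezPhi} becomes $C_{p,q}(t,x)=\vec{s}(t)^T B\,\vec{s}(x)$. For any $\vec{v}\in\K^n$ the coefficients of the polynomial $\vec{s}(t)^T B\,\vec{v}$ in $t$ are exactly the entries of $B\vec{v}$, so $\vec{v}$ lies in the null space if and only if $\vec{s}(t)^T B\,\vec{v}\equiv 0$ as a polynomial in $t$. I would therefore certify kernel membership through polynomial identities in $t$.

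First I would identify the columns of $X_j$ with derivative evaluations. Differentiating $C_{p,q}(t,x)=\vec{s}(t)^T B\,\vec{s}(x)$ $r$ times in $x$ and setting $x=x_j$ gives
\begin{equation}
\left.\partial_x^{\,r} C_{p,q}(t,x)\right|_{x=x_j}=\vec{s}(t)^T B\,\vec{w}^{(r)}_j,
\end{equation}
where $\vec{w}^{(r)}_j$ has entries $\left.\partial_x^{\,r} x^m\right|_{x_j}=m^{\underline r}x_j^{\,m-r}$ for $m=0,\dots,n-1$; these are precisely the columns of the block in~\eqref{eq:paramshape}. Hence it suffices to show $\left.\partial_x^{\,r}C_{p,q}(t,x)\right|_{x=x_j}\equiv 0$ for $r=0,1,\dots,k_j-1$.

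The heart of the argument is a divisibility step. Let $g=\gcd(P,Q)$ and let $k_j$ be the multiplicity of $x_j$ in $g$, so that $(x-x_j)^{k_j}$ divides both $P(x)$ and $Q(x)$, and hence the numerator $N(t,x)=P(t)Q(x)-P(x)Q(t)$ viewed as a polynomial in $x$. Since $N(t,x)=(t-x)\,C_{p,q}(t,x)$ and the factors $(x-x_j)$ and $(t-x)$ are coprime in $\overline{\K}(t)[x]$ (their roots $x=x_j$ and $x=t$ differ, $t$ being transcendental over $\overline{\K}$), I conclude $(x-x_j)^{k_j}\mid C_{p,q}(t,x)$. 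Writing $C_{p,q}=(x-x_j)^{k_j}R(t,x)$ and applying Leibniz, every derivative $\left.\partial_x^{\,r}C_{p,q}\right|_{x=x_j}$ with $r<k_j$ vanishes, placing each column of $X_j$ in the null space of $B$.

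It remains to prove independence and spanning. The entry of $\vec{w}^{(r)}_j$ in row $m$ is $m^{\underline r}x_j^{\,m-r}$, so $\vec{w}^{(r)}_j$ is the coefficient vector of the functional $u\mapsto u^{(r)}(x_j)$ on $\P_{n-1}$; linear independence of the full family then follows from the solvability of the Hermite interpolation problem with the $\sum_j k_j$ prescribed nodes-and-multiplicities (within one block it is already visible from the triangular pattern, as $\vec{w}^{(r)}_j$ has its first nonzero entry in row $r$). To see that these vectors exhaust the kernel, I would invoke the standard fact that $\operatorname{rank}\bez(P,Q)=n-\deg g$, so the nullity equals $\deg g=\sum_j k_j$, matching the number of columns constructed. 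The step I expect to be most delicate is the divisibility transfer from $N$ to $C_{p,q}$: it must be carried out in $\overline{\K}(t)[x]$, where $g$ splits and the coprimality of $(x-x_j)$ with $(t-x)$ is available, rather than naively in $\K[t,x]$; granting that transfer, the surrounding steps are routine.
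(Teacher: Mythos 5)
Your proof is correct, and there is in fact no in-paper argument to compare it with: the authors give no proof of Theorem~\ref{kernelmonomial}, deferring entirely to Heinig and Rost \cite{HeinigRost}. Your route --- reading kernel membership off the generating identity $C_{p,q}(t,x)=\vec{s}(t)^T\,\bez_{\mathrm{St}}(P,Q)\,\vec{s}(x)$, differentiating in $x$ at $x_j$, transferring the divisibility $(x-x_j)^{k_j}\mid P(t)Q(x)-P(x)Q(t)$ across the factor $(t-x)$, and then counting via $\mathrm{rank}\,\bez(P,Q)=n-\deg\gcd(P,Q)$ --- is sound, and you correctly flag and handle the one delicate point (coprimality of $x-x_j$ and $t-x$ in $\overline{\K}(t)[x]$ because $t$ is transcendental over $\overline{\K}$; note that the vanishing of $\partial_x^r C_{p,q}(t,x_j)$ as an element of $\overline{\K}(t)$ already suffices, since it is a polynomial in $t$, so you never need the quotient $R$ to have polynomial coefficients). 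An even more elementary substitute for the divisibility transfer is Leibniz applied to $N=(t-x)C_{p,q}$, which gives $\partial_x^r N=(t-x)\,\partial_x^r C_{p,q}-r\,\partial_x^{r-1}C_{p,q}$, and induction on $r<k_j$ kills $\partial_x^r C_{p,q}(t,x_j)$ because $\K[t]$ is a domain. The only imported ingredient is the rank formula, and there you should take care to cite a proof that does not itself derive the formula from the kernel description you are proving (some expositions do). A circularity-free classical route is the factorization $C_{P,Q}(t,x)=g(t)g(x)\,C_{P_1,Q_1}(t,x)$ for $P=gP_1$, $Q=gQ_1$, which in matrix form reads $\bez(P,Q)=\mathbf{G}^T\bez(P_1,Q_1)\,\mathbf{G}$ with $\mathbf{G}$ the full-row-rank multiplication-by-$g$ matrix and $\bez(P_1,Q_1)$ nonsingular by Cayley's determinant--resultant relation for the coprime pair $P_1,Q_1$; this yields $\ker\bez(P,Q)=\ker\mathbf{G}$, hence the rank count and the spanning statement simultaneously, identifying the kernel with the annihilator of $g\cdot\P_{n-k-1}$, i.e.\ with exactly your Hermite functionals. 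What your version buys is directness --- membership is verified straight from the Cayley identity with minimal machinery --- at the price of one external fact; the factorization route is longer but self-contained. One last pedantic point: your Hermite-independence step needs $\sum_j k_j\le n$, which holds because $\deg g\le n-1$ under the paper's standing assumption that $P$ and $Q$ are not proportional; it is worth saying so explicitly.
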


In particular, when the null space is of dimension $1$, it is generated by
$(1,\alpha,\alpha^2, \ldots, \alpha^{n-1})^T,$ where $\alpha$ is the unique common root of $P(t)$ and $Q(t)$.  Therefore, if  $(v_1,\ldots,v_n)$ is a nonzero vector of the nullspace, then $\alpha=\displaystyle\frac{v_2}{v_1}.$

\subsection{The null space of  $\mathbf{ Bez_\Phi(P,Q)}$} \label{nullspacefi}

This section introduces the generalization of  Theorem \ref{kernelmonomial} to any basis.
Following the notation of Definition \ref{def:cayleyUnivariate}, let $\bm{\Phi}(t) = \{\phi_0(t),\dots ,\phi_{n-1}(t)\}$ be a basis for $\P_{n-1},$ and $\bez_{\bm{\Phi}}(P,Q)$ the \B\ matrix in the polynomial basis $\bm{\Phi}(t)$ (see Equation (\ref{BezPhi})). The following lemma provides the generalization of Theorem \ref{kernelmonomial} to any basis of $\P_{n-1}$
%
%
%
\begin{lemma} \label{lemagral}
Let $i\leq n-1$ and let $\phi_j^{(i)}(t)$ denote the $i$-th derivative of  $\phi_j(t)$ for $j\leq n-1$. Then,
$$
\mathbf{P}^t_{ \Phi \rightarrow \mathrm{St}} \left(\begin{array}{c} 0\\ \vdots \\ 0\\ i! \\ \vdots \\  (n-1)^{\underline{i}}\,t^{n-1-i} \end{array}\right) = \left(\begin{array}{c}\phi_1^{(i)}(t) \\ \vdots \\ \vdots \\ \vdots \\ \phi_n^{(i)}(t)\end{array}\right)
$$
\end{lemma}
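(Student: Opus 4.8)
The plan is to reduce the identity to the single basis-free fact that differentiation commutes with left multiplication by a constant matrix. First I would pin down the meaning of $\mathbf{P}_{\Phi\rightarrow\mathrm{St}}$ from the change-of-basis convention fixed after Definition~\ref{def:cayleyUnivariate}. Writing both bases of $\P_{n-1}$ as row vectors, that convention gives
$$(1,t,\ldots,t^{n-1})\,\mathbf{P}_{\Phi\rightarrow\mathrm{St}}=(\phi_0(t),\ldots,\phi_{n-1}(t)),$$
so the $j$-th column of $\mathbf{P}_{\Phi\rightarrow\mathrm{St}}$ records the monomial coordinates of $\phi_j$. Transposing turns this into the column identity
$$\mathbf{P}^t_{\Phi\rightarrow\mathrm{St}}\begin{pmatrix}1\\ t\\ \vdots\\ t^{n-1}\end{pmatrix}=\begin{pmatrix}\phi_0(t)\\ \vdots\\ \phi_{n-1}(t)\end{pmatrix},$$
which is the starting relation for everything that follows.

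Next I would recognize the column vector on the left-hand side of the lemma as the $i$-th derivative of the monomial vector. Differentiating $t^m$ exactly $i$ times yields $0$ when $m<i$ and $m^{\underline{i}}\,t^{m-i}$ when $m\geq i$; applying this entry by entry to $(1,t,\ldots,t^{n-1})^t$ produces precisely $(0,\ldots,0,i!,\ldots,(n-1)^{\underline{i}}\,t^{n-1-i})^t$, with exactly $i$ leading zeros. This is a routine termwise computation and poses no real difficulty.

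The conclusion is then immediate. Because $\mathbf{P}^t_{\Phi\rightarrow\mathrm{St}}$ has constant scalar entries, it passes through the differentiation operator, so differentiating the starting relation $i$ times with respect to $t$ gives
$$\begin{pmatrix}\phi_0^{(i)}(t)\\ \vdots\\ \phi_{n-1}^{(i)}(t)\end{pmatrix}=\frac{d^i}{dt^i}\left(\mathbf{P}^t_{\Phi\rightarrow\mathrm{St}}\begin{pmatrix}1\\ \vdots\\ t^{n-1}\end{pmatrix}\right)=\mathbf{P}^t_{\Phi\rightarrow\mathrm{St}}\,\frac{d^i}{dt^i}\begin{pmatrix}1\\ \vdots\\ t^{n-1}\end{pmatrix},$$
and the rightmost derivative is exactly the vector identified in the previous step. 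This is the asserted equality.

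There is no substantive analytic or algebraic obstacle here: the whole content is the interaction of the change-of-basis convention with linearity of differentiation. The only things to watch are bookkeeping matters, namely the off-by-one in the statement (it lists $\phi_1^{(i)},\ldots,\phi_n^{(i)}$, whereas the basis is indexed $\phi_0,\ldots,\phi_{n-1}$) and getting the orientation of $\mathbf{P}_{\Phi\rightarrow\mathrm{St}}$ versus its transpose consistent with the convention. Once the starting relation is written in the correct orientation, the rest follows in a single line.
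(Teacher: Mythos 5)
Your proof is correct and is essentially the paper's own argument in vectorized form: the paper expands each $\phi_j(t)=\sum_k a_{jk}t^k$ via the rows of $\mathbf{P}^t_{\Phi\rightarrow\mathrm{St}}$ and differentiates termwise, which is exactly your step of differentiating the identity $\mathbf{P}^t_{\Phi\rightarrow\mathrm{St}}\,(1,t,\ldots,t^{n-1})^t=(\phi_0(t),\ldots,\phi_{n-1}(t))^t$ and letting the constant matrix pass through $d^i/dt^i$. Your remark on the off-by-one indexing ($\phi_1,\ldots,\phi_n$ versus $\phi_0,\ldots,\phi_{n-1}$) is accurate --- the paper's statement and proof carry the same inconsistency --- and does not affect the validity of the argument.
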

\begin{proof} Observe that the entries of the $j$th-row of  $\mathbf{P}^t_{\Phi \rightarrow \mathrm{St}}$ are the coordinates of $\phi_j(t)$ with respect to the monomial basis. Then, if $\mathbf{P}^t_{\Phi \rightarrow \mathrm{St}}=(a_{ij})$, we have
$$
\phi_j(t)= \sum\limits_{k=0}^{k=n-1} a_{jk}t^k.
$$
Hence,
$$
\phi_j^{(i)}(t)= \sum\limits_{k=i+1}^{k=n} a_{jk} (k-1)\cdots (k-i)  t^{k-1-i} = \left(a_{j1},\ldots,a_{jn} \right)\,\left(\begin{array}{c} 0\\ \vdots \\ 0\\ i! \\ \vdots \\  (n-1)^{\underline{i}}\,t^{n-1-i} \end{array}\right)
$$
\end{proof}
As a corollary, we have the generalization of Theorem \ref{kernelmonomial}.
\begin{cor} \label{nullspacegeneral}The null space of $\mathbf{ Bez}_\Phi(P,Q)$ is spanned by the columns of the matrix
$$
\left(X_1^\Phi, X_2^\Phi, \ldots, X_k^\Phi\right),
$$
where each block $X_j^\Phi$ corresponds to a different common root of
$P(t)$ and $Q(t)$. The dimension of each block is the geometric
multiplicity, $k_j,$ of the common root~$x_j$ (i.e., its multiplicity
as a root of the greatest common divisor of $P(t)$ and $Q(t)$).
Moreover each block can be parameterized by the common root~$x_j$
in the form
\begin{equation}
X_j^\Phi=\left( \begin{array}{cccc}
\phi_1(x_j) & \phi_1^{(1)}(x_j) &   \ldots  & \phi^{(k_j-1)}_1(x_j)  \\
\phi_2(x_j) & \phi_2^{(1)}(x_j) &   \ldots  & \phi^{(k_j-1)}_2(x_j) \\
\vdots & \vdots &     &   \vdots \\
 \phi_{n-1}(x_j) & \phi^{(1)}_{n-1}(x_j) & \ldots & \phi^{(k_j-1)}_{n-1}(x_j)   \\
\end{array}
\right)
\end{equation}
\end{cor}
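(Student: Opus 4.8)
The plan is to reduce the general-basis statement to the monomial case of Theorem~\ref{kernelmonomial} via the congruence relation recorded in Section~\ref{SecBez}, transporting the null space through the change-of-basis matrix and then applying Lemma~\ref{lemagral} column by column. The whole argument is linear-algebraic: since the two Bezoutians are congruent through an invertible matrix, their null spaces are related by an explicit invertible transformation, and Lemma~\ref{lemagral} tells us exactly what that transformation does to the generating columns of the monomial null space.

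First I would specialize the congruence $\bez_\Phi(P,Q) = \mathbf{P}_{\Psi\rightarrow\Phi}\,\bez_\Psi(P,Q)\,\mathbf{P}^t_{\Psi\rightarrow\Phi}$ to $\Psi=\mathrm{St}$, obtaining $\bez_\Phi(P,Q) = \mathbf{P}_{\mathrm{St}\rightarrow\Phi}\,\bez_{\mathrm{St}}(P,Q)\,\mathbf{P}^t_{\mathrm{St}\rightarrow\Phi}$. Writing $\mathbf{M}=\mathbf{P}_{\mathrm{St}\rightarrow\Phi}$, which is invertible, a vector $v$ satisfies $\bez_\Phi(P,Q)\,v=0$ iff $\mathbf{M}\,\bez_{\mathrm{St}}(P,Q)\,\mathbf{M}^t v=0$, iff $\bez_{\mathrm{St}}(P,Q)\,\mathbf{M}^t v=0$ (cancelling the invertible $\mathbf{M}$ on the left), iff $\mathbf{M}^t v\in\ker\bez_{\mathrm{St}}(P,Q)$. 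Hence $\ker\bez_\Phi(P,Q) = (\mathbf{M}^t)^{-1}\,\ker\bez_{\mathrm{St}}(P,Q)$. The key bookkeeping step is to identify this transporting matrix with the one appearing in Lemma~\ref{lemagral}: since $(\mathbf{M}^t)^{-1} = (\mathbf{M}^{-1})^t$ and $\mathbf{M}^{-1}=\mathbf{P}^{-1}_{\mathrm{St}\rightarrow\Phi}=\mathbf{P}_{\Phi\rightarrow\mathrm{St}}$, we get $(\mathbf{M}^t)^{-1}=\mathbf{P}^t_{\Phi\rightarrow\mathrm{St}}$. Thus $\ker\bez_\Phi(P,Q)=\mathbf{P}^t_{\Phi\rightarrow\mathrm{St}}\,\ker\bez_{\mathrm{St}}(P,Q)$.

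Next I would invoke Theorem~\ref{kernelmonomial}: $\ker\bez_{\mathrm{St}}(P,Q)$ is spanned by the columns of $(X_1,\ldots,X_k)$, and the $i$-th column of the block $X_j$ (for $i=0,\ldots,k_j-1$) is exactly the vector $(0,\ldots,0,i!,\ldots,(n-1)^{\underline{i}}\,x_j^{\,n-1-i})^t$ on the left-hand side of Lemma~\ref{lemagral} specialized to $t=x_j$; indeed it is the $i$-th derivative of the monomial coordinate vector evaluated at $x_j$. Applying $\mathbf{P}^t_{\Phi\rightarrow\mathrm{St}}$ to this column and reading off the right-hand side of Lemma~\ref{lemagral} at $t=x_j$ produces the column of values $(\phi_\bullet^{(i)}(x_j))$, which is precisely the $i$-th column of $X_j^\Phi$. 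Therefore $\mathbf{P}^t_{\Phi\rightarrow\mathrm{St}}\,X_j=X_j^\Phi$ block by block, and consequently $\ker\bez_\Phi(P,Q)$ is spanned by the columns of $(X_1^\Phi,\ldots,X_k^\Phi)$.

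Finally, because $\mathbf{P}^t_{\Phi\rightarrow\mathrm{St}}$ is invertible, it carries a spanning, linearly independent family to a spanning, linearly independent family; hence the block decomposition and the dimension count survive unchanged, each $X_j^\Phi$ retaining $k_j$ independent columns attached to the root $x_j$ and blocks for distinct roots staying jointly independent. I do not anticipate a genuine obstacle: the argument is a direct transport of Theorem~\ref{kernelmonomial} through Lemma~\ref{lemagral}. The only delicate point is orienting the change-of-basis matrices correctly, since the congruence is phrased with $\mathbf{P}_{\mathrm{St}\rightarrow\Phi}$ while Lemma~\ref{lemagral} is phrased with $\mathbf{P}_{\Phi\rightarrow\mathrm{St}}$; checking that these match as inverse-transposes of one another is the crux of the whole derivation.
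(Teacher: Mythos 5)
Your proposal is correct and follows essentially the same route as the paper: the paper likewise specializes the congruence to $\bez_\Phi(P,Q)=\mathbf{P}_{\mathrm{St}\rightarrow\Phi}\,\bez_{\mathrm{St}}(P,Q)\,\mathbf{P}^t_{\mathrm{St}\rightarrow\Phi}$, rewrites it as $\bez_\Phi(P,Q)\,\mathbf{P}^t_{\Phi\rightarrow\mathrm{St}}=\mathbf{P}_{\mathrm{St}\rightarrow\Phi}\,\bez_{\mathrm{St}}(P,Q)$, and then invokes Theorem~\ref{kernelmonomial} together with Lemma~\ref{lemagral}. You have merely made explicit the kernel-transport and the column-by-column identification $\mathbf{P}^t_{\Phi\rightarrow\mathrm{St}}X_j=X_j^\Phi$ that the paper leaves to the reader, including the orientation check on the change-of-basis matrices, which you resolve correctly.
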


\begin{proof}
Since $$\mathbf{ {Bez}}_{\Phi}(P,Q) =  \mathbf{P}_{ \mathrm{St}\rightarrow \Phi}\, \mathbf{ {Bez}}_{\mathrm{St}}(P,Q) \,\mathbf{P}^t_{\mathrm{St} \rightarrow \Phi},$$ we have

$$ \mathbf{ {Bez}}_{\Phi}(P,Q) \mathbf{P}^t_{  \Phi \rightarrow \mathrm{St}}  =  \mathbf{P}_{ \mathrm{St}\rightarrow \Phi} \mathbf{ {Bez}}_{\mathrm{St}}(P,Q), $$
and the result follows from Theorem \ref{kernelmonomial}  and Lemma \ref{lemagral}.
\end{proof}

As we did before in the monomial basis, the next result provides a closed expression for $\alpha$ if  $\alpha$ is the only simple common root of $P(t)$ and $Q(t)$.

\begin{prop}\label{thm:moment-gen}
Let $P(t)$ and $Q(t)$ be univariate polynomials with only one simple common root $\alpha$. Let $\,1 = a_1\phi_0(t) + \ldots + a_n\phi_{n-1}(t)$, and $\,t = b_1\phi_0(t) + \ldots + b_n\phi_{n-1}(t)$.
If $(u_1, u_2,\dots, u_n)^t$ is a non null vector in the null space of $\bez_\Phi(P,Q)$, then
\begin{equation}\label{solsimple}
\alpha = \frac{ b_1 u_1+\ldots + b_n u_n}{a_1 u_1+\ldots + a_n u_n}.
\end{equation}

\end{prop}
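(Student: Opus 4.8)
The plan is to read off the null space directly from Corollary~\ref{nullspacegeneral}. Since $P$ and $Q$ have exactly one common root $\alpha$ and it is simple, $\alpha$ is a simple root of $\gcd(P,Q)$, so its geometric multiplicity is $k_1=1$. Thus there is a single block of width one, and the null space of $\bez_\Phi(P,Q)$ is the one-dimensional subspace spanned by the column $c$ whose $j$-th entry is $\phi_{j-1}(\alpha)$. Consequently any non-null vector $(u_1,\ldots,u_n)^t$ of the null space is a nonzero scalar multiple of $c$: there is some $\lambda\neq 0$ with $u_j=\lambda\,\phi_{j-1}(\alpha)$ for all $j$.

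Next I would evaluate the two coordinate identities at $t=\alpha$. From $1=a_1\phi_0(t)+\cdots+a_n\phi_{n-1}(t)$ I obtain $a_1\phi_0(\alpha)+\cdots+a_n\phi_{n-1}(\alpha)=1$, and from $t=b_1\phi_0(t)+\cdots+b_n\phi_{n-1}(t)$ I obtain $b_1\phi_0(\alpha)+\cdots+b_n\phi_{n-1}(\alpha)=\alpha$. Substituting $u_j=\lambda\,\phi_{j-1}(\alpha)$ into the numerator and denominator of~\eqref{solsimple} then gives $a_1u_1+\cdots+a_nu_n=\lambda\sum_{j=1}^{n}a_j\phi_{j-1}(\alpha)=\lambda$ and $b_1u_1+\cdots+b_nu_n=\lambda\sum_{j=1}^{n}b_j\phi_{j-1}(\alpha)=\lambda\alpha$, so the quotient equals $\alpha$ after the common factor $\lambda$ cancels.

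There is no real obstacle here; the one delicate point is that the formula~\eqref{solsimple} must have a nonzero denominator, and this is settled for free by the computation above, which shows the denominator equals $\lambda\neq 0$ (it is $\lambda$ times the value at $\alpha$ of the constant polynomial $1$). The crux of the argument is therefore the appeal to the one-dimensionality and explicit parameterization of the null space in Corollary~\ref{nullspacegeneral}; everything after that is the elementary observation that pairing the spanning vector $c$ with the coordinate vectors of $1$ and of $t$ reproduces, respectively, the values $1$ and $\alpha$ of those polynomials at the common root.
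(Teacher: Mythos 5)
Your proof is correct and follows essentially the same route as the paper: both arguments start from the fact (Corollary~\ref{nullspacegeneral}) that the null space is spanned by $\bigl(\phi_0(\alpha),\ldots,\phi_{n-1}(\alpha)\bigr)^t$, so $u_j=\lambda\,\phi_{j-1}(\alpha)$ with $\lambda\neq 0$, and then extract $\lambda=\sum_j a_j u_j$ and $\lambda\alpha=\sum_j b_j u_j$. The only difference is presentational --- the paper obtains these two identities as the first two rows of the matrix relation $\mathbf{P}^t_{\mathrm{St}\rightarrow\Phi}(u_1,\ldots,u_n)^t=(\lambda,\ldots,\lambda\alpha^{n-1})^t$, whereas you get them directly by evaluating the expansions of $1$ and $t$ at $t=\alpha$; your explicit remark that the denominator equals $\lambda\neq 0$ is a point the paper leaves implicit.
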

\begin{proof}
By hypothesis, there exists $\lambda\neq 0$ such that
$$
\left(\begin{array}{c}u_1\\ \vdots \\ u_n\end{array}\right) = \lambda\left(\begin{array}{c}\phi_0(\alpha)\\ \vdots \\ \phi_{n-1}(\alpha)\end{array}\right)=   \lambda \mathbf{P}_{\Phi \rightarrow \mathrm{St}}^t \left(\begin{array}{c} 1 \\ \vdots \\\alpha^{n-1} \end{array}\right)
 =  \mathbf{P}_{\Phi \rightarrow \mathrm{St}}^t \left(\begin{array}{c} \lambda \\ \vdots \\ \lambda\alpha^{n-1} \end{array}\right).
$$
Thus,
$$
  \mathbf{P}_{\mathrm{St} \rightarrow \Phi }^t \left(\begin{array}{c}u_1\\ \vdots \\ u_n\end{array}\right) =\left(\begin{array}{c} \lambda \\ \vdots \\ \lambda\alpha^{n-1} \end{array}\right).
$$
Since the first two rows of  $\mathbf{P}_{\mathrm{St} \rightarrow \Phi }^t$ are $(a_1,\ldots,a_n)$ and $(b_1,\ldots,b_n)$ respectively, we have
$
\lambda = a_1u_1+\ldots + a_n u_n$ and $ \lambda\alpha = b_1u_1+\ldots + b_n u_n.$ It follows that
$$
\alpha = \frac{ b_1u_1+\ldots + b_n u_n}{a_1u_1+\ldots + a_n u_n}.
$$

\end{proof}

\begin{example}
We consider Example 3 of \cite{BernsteinBez}. Let $P(t)$ and $Q(t)$ in $\P_4$ be expressed in the Bernstein basis  $\{\beta_i^{(4)}(t) = \binom{4}{i} (1-t)^{4-i} t^i, 0\le i \le 4 \}$ as follows
$$
P(t) = 4\beta_0^{(4)}(t) + 4\beta_1^{(4)}(t) + \frac{19}{6}\beta_2^{(4)}(t) + \frac{3}{2}\beta_3^{(4)}(t);$$ $$
Q(t) = \frac{1}{2}\beta_0^{(4)}(t) + \frac{7}{16}\beta_1^{(4)}(t) + \frac{1}{24}\beta_2^{(4)}(t) - \frac{7}{16}\beta_3^{(4)}(t)-\frac{3}{4} \beta_4^{(4)}(t);
$$
Then, the Bezout matrix in the Bernstein basis is equal to
$$\mathbf{Bez}_\beta(P,Q)= \left[ \begin {array}{cccc} 1&{\frac {17}{6}}&\frac{10}{3}&3
\\ \noalign{\medskip}{\frac {17}{6}}&{\frac {157}{36}}&{\frac {83}{18}
}&4\\ \noalign{\medskip}\frac{10}{3}&{\frac {83}{18}}&{\frac {187}{36}}&{
\frac {19}{4}}\\ \noalign{\medskip}3&4&{\frac {19}{4}}&\frac{9}{2}\end {array}
\right]
$$ and its null space is spanned by the vector $(-1,6,-12,8)$. Then they have only one simple common root. Moreover, since $1=\sum\limits_{k=0}^3\beta_k^{(3)}(t)$ and $t=\sum\limits_{k=1}^3\displaystyle\frac{k}{2}\beta_k^{(3)}(t)$, then by Proposition \ref{thm:moment-gen} their common root is equal to 2,
$$2=\frac{\displaystyle\frac{1}{3}\,6-\frac{2}{3}\,12+8}{(-1+6-12+8)}\cdot$$
\end{example}

\section{Greatest common divisors and Bezout matrices}
Let $P(t), Q_1(t),\ldots,Q_r(t)$ be polynomials in $\P_n$ with $n=\deg(P(t))$. Let
\[
\mathbf{B}^P_\mathrm{St}(Q_1,\ldots,Q_r)=\left(
\begin{array}{c} \bez_{\mathrm{St}}(P,Q_1) \\
\vdots  \\ \bez_{\mathrm{St}}(P,Q_r)
\end{array}
\right),\,\,
\mathbf{B}^P_{\Phi}(Q_1,\ldots,Q_r)=\left(
\begin{array}{c} \bez_{\Phi}(P,Q_1) \\
\vdots  \\ \bez_{\Phi}(P,Q_r)
\end{array}
\right).
\]
Then, the aim of this section is to describe two different methods for computing the polynomial $\gcd(P,Q_1,\ldots,Q_r)$ in the monomial basis, from the matrix $\mathbf{B}^P_{\Phi}(Q_1,\ldots,Q_r)$. We need first to introduce the Barnett's method for computing greatest common divisors (for details, see \cite{Barnett} and \cite{Diaz:BGCD:2002}).

 \subsection{Barnett's Method through $\mathbf{ {Bez}_{\mathrm{St}}(P,Q)}$}\label{barnettst}
The following results are the formulation of Barnett's theorems using Bezout matrices.
\begin{theorem} 
The degree of the greatest common divisor of
$P(t),Q_1(t),\ldots,Q_r(t)$ verifies the following formula
\[
\deg(\gcd(P,Q_1,\ldots,Q_r))=n-\hbox{\rm rank}(
\mathbf{B}^P_\mathrm{St}(Q_1,\ldots,Q_r)).
\]
\end{theorem}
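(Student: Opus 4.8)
The plan is to reduce the rank of the stacked matrix to a dimension count for an intersection of null spaces, and then to evaluate that dimension using the confluent Vandermonde description of each individual null space supplied by Theorem~\ref{kernelmonomial}. First I would record the elementary observation that
\[
\ker\big(\mathbf{B}^P_{\mathrm{St}}(Q_1,\ldots,Q_r)\big)=\bigcap_{j=1}^{r}\ker\big(\bez_{\mathrm{St}}(P,Q_j)\big),
\]
since a vector is annihilated by the stacked matrix exactly when it is annihilated by every block. Because $\deg P=n$ and each $Q_j\in\P_n$, all the blocks are $n\times n$, so the stacked matrix has $n$ columns and rank--nullity gives $\mathrm{rank}=n-\dim\ker$. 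Hence it suffices to prove that the dimension of the above intersection equals $\deg(\gcd(P,Q_1,\ldots,Q_r))$.

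Next I would describe each $\ker(\bez_{\mathrm{St}}(P,Q_j))$ via Theorem~\ref{kernelmonomial}. Writing $k_{j,x}$ for the multiplicity of a common root $x$ of $P$ and $Q_j$ in $\gcd(P,Q_j)$, that null space is spanned by the vectors $v_0(x),\ldots,v_{k_{j,x}-1}(x)$ ranging over all such $x$, where $v_i(x)=\tfrac{d^{i}}{dx^{i}}(1,x,\ldots,x^{n-1})^{t}$ is the column of \eqref{eq:paramshape} carrying the $i$th derivative. Let $x_1,\ldots,x_s$ be the roots occurring in at least one of these spanning sets and put $m_\ell=\max_j k_{j,x_\ell}$. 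The key structural input is that the whole family $\{v_i(x_\ell): 1\le \ell\le s,\ 0\le i\le m_\ell-1\}$ is linearly independent, by the nonvanishing of the confluent Vandermonde determinant; call its span $W$. With respect to this family as a basis of $W$, each $\ker(\bez_{\mathrm{St}}(P,Q_j))$ is exactly the coordinate subspace spanned by the $v_i(x_\ell)$ with $0\le i\le k_{j,x_\ell}-1$.

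Then I would use that the intersection of coordinate subspaces (with respect to a fixed basis) is the coordinate subspace indexed by the common coordinates, so
\[
\bigcap_{j=1}^{r}\ker\big(\bez_{\mathrm{St}}(P,Q_j)\big)=\mathrm{span}\big\{v_i(x_\ell):0\le i\le \min_j k_{j,x_\ell}-1\big\},
\]
whose dimension is $\sum_{\ell}\min_j k_{j,x_\ell}$. Finally I would invoke the multiplicity identity $\min_j k_{j,x_\ell}=\min_j\min(\mathrm{mult}_{x_\ell}P,\mathrm{mult}_{x_\ell}Q_j)=\mathrm{mult}_{x_\ell}\gcd(P,Q_1,\ldots,Q_r)$, so that the sum collapses to $\deg(\gcd(P,Q_1,\ldots,Q_r))$. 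Combining with rank--nullity yields the claimed formula.

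The step I expect to be the main obstacle is the passage from the individual null-space descriptions to their intersection, namely checking rigorously that the various $\ker(\bez_{\mathrm{St}}(P,Q_j))$ are genuine coordinate subspaces inside one common confluent Vandermonde space $W$; this is what turns ``intersection $=$ minimum of multiplicities'' into an equality rather than a mere inequality. An alternative route, more in the spirit of the section title, would bypass this by invoking Barnett's classical companion-matrix theorem together with a factorization $\bez_{\mathrm{St}}(P,Q_j)=\mathbf{S}\,Q_j(C_P)$ with $\mathbf{S}$ invertible and independent of $j$, where $C_P$ is the companion matrix of $P$; then left multiplication by the block-diagonal invertible matrix $\mathrm{diag}(\mathbf{S},\ldots,\mathbf{S})$ preserves the rank of the stacked matrix and transfers Barnett's formula directly. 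In that approach the obstacle merely shifts to establishing the factorization and matching the degree normalization of $P$.
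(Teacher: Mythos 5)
Your argument is sound, but it is not the paper's: the paper offers no proof of this theorem at all, stating only that the proofs of the three Barnett-type results can be found in \cite{Diaz:BGCD:2002}. The proof there follows Barnett's classical companion-matrix method --- in essence your ``alternative route'': one uses the factorization $\bez_{\mathrm{St}}(P,Q_j)=\mathbf{S}\,Q_j(C_P)$, where $C_P$ is the companion matrix of $P$ and $\mathbf{S}=\bez_{\mathrm{St}}(P,1)$ is a nonsingular Hankel matrix (nonsingular precisely because $\deg P=n$, which is where the degree normalization enters), so that left multiplication by the invertible block-diagonal matrix with blocks $\mathbf{S}$ identifies the rank of $\mathbf{B}^P_{\mathrm{St}}(Q_1,\ldots,Q_r)$ with that of the stacked $Q_j(C_P)$, to which Barnett's rank theorem applies. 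Your primary route is genuinely different and has a real advantage in the context of this manuscript: it is self-contained, resting only on Theorem~\ref{kernelmonomial} plus rank--nullity, and it explains the formula structurally (the kernel of the stacked matrix is the intersection of the pairwise kernels, and root multiplicities intersect as minima), in the same spirit as Section~\ref{seccion3}. What the companion-matrix route buys instead is a purely coefficient-level argument with no mention of roots, multiplicities, or field extensions.

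Three points must be made explicit to close the obstacle you flagged. First, the common roots need not lie in $\K$; pass to the algebraic closure, noting that neither $\mathrm{rank}\,\mathbf{B}^P_{\mathrm{St}}(Q_1,\ldots,Q_r)$ nor $\deg\gcd(P,Q_1,\ldots,Q_r)$ changes under field extension. Second --- and this is exactly the rigor behind your ``coordinate subspace'' step --- the family $\{v_i(x_\ell)\}$ has $\sum_\ell m_\ell$ members, and since $k_{j,x_\ell}$ is the multiplicity of $x_\ell$ in $\gcd(P,Q_j)$ one has $m_\ell=\max_j k_{j,x_\ell}\le \mathrm{mult}_{x_\ell}(P)$, whence $\sum_\ell m_\ell\le\deg P=n$; a confluent Vandermonde matrix on distinct nodes with at most $n$ columns has full column rank, so the family is linearly independent, each $\ker\bez_{\mathrm{St}}(P,Q_j)$ is spanned by a subset of this basis of $W$, and the intersection of spans of subsets of one linearly independent set is the span of the intersection of the subsets. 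This turns your inequality into the needed equality. Third, Theorem~\ref{kernelmonomial} is stated under the standing hypothesis of Section~\ref{seccion3} that the two polynomials are neither null nor proportional; if some $Q_j$ is zero or proportional to $P$, its block is the zero matrix, which imposes no constraint in the intersection, and discarding such indices changes neither the rank nor the gcd --- except in the degenerate case where every $Q_j$ is zero or proportional to $P$, where the formula is checked directly ($\mathrm{rank}=0$ and $\gcd$ of degree $n$). With these additions your proof is complete.
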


\begin{theorem} \label{independencia}
If $c_1,\ldots ,c_n$ are the columns of the matrix
$\mathbf{B}^P_\mathrm{St}(Q_1 ,\ldots , Q_r)$, and its rank is $n-k,$ then the last $n-k$ columns $c_{k+1},\ldots ,c_n$ are linearly independent,
and each $c_i$ ($1\leq i\leq k$) can be written as a linear
combination of  $c_{k+1},\ldots,c_{n}$.
\end{theorem}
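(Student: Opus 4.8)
The plan is to read the statement as a claim about the linear relations among the columns $c_1,\dots,c_n$ of $\mathbf{B}^P_{\mathrm{St}}(Q_1,\dots,Q_r)$, and to control those relations through the null space of the matrix. First I would record the elementary dictionary: a column relation $\sum_{m=0}^{n-1}\mu_m\,c_{m+1}=0$ is exactly a null vector $\bm{\mu}=(\mu_0,\dots,\mu_{n-1})^t$ of $\mathbf{B}^P_{\mathrm{St}}$. Hence the last $n-k$ columns are independent precisely when no nonzero null vector has its first $k$ coordinates equal to zero, and the first $k$ columns lie in the span of the last $n-k$ precisely when, for each $i\le k$, the null space contains a vector with a $1$ in position $i-1$ and zeros in the remaining positions $0,\dots,k-1$. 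Both conditions are packaged into one claim: if $U$ is an $n\times k$ matrix whose columns form a basis of the null space, written in block form $U=\binom{U_1}{U_2}$ with $U_1$ the top $k\times k$ block, then $U_1$ is invertible.

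Next I would identify the null space. Since $\mathbf{B}^P_{\mathrm{St}}$ stacks the Bezoutians $\bez_{\mathrm{St}}(P,Q_i)$, its null space is the intersection $\bigcap_i\ker\bez_{\mathrm{St}}(P,Q_i)$. By Theorem~\ref{kernelmonomial} each such kernel is spanned by the (confluent) blocks of the shape~(\ref{eq:paramshape}) attached to the common roots of $P$ and $Q_i$; intersecting over $i$ leaves exactly the blocks attached to the roots common to $P$ and all the $Q_i$, that is, to the roots of $g=\gcd(P,Q_1,\dots,Q_r)$, each block having size equal to the multiplicity of that root in $g$. Thus the columns of $U$ may be taken to be these blocks, and $\dim\ker\mathbf{B}^P_{\mathrm{St}}=\deg g=k$, consistent with the rank formula of the previous theorem.

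The heart of the argument is then the invertibility of $U_1$. Its top $k$ rows are obtained by evaluating $1,t,\dots,t^{k-1}$ and their successive derivatives at the distinct roots $x_1,\dots,x_s$ of $g$, with $x_j$ contributing $k_j$ columns and $\sum_j k_j=k$; explicitly the $(\ell,(j,c))$ entry is the falling-factorial expression $\ell^{\underline{c}}\,x_j^{\ell-c}$ of~(\ref{eq:paramshape}). This is (up to transposition) the coefficient matrix of the Hermite interpolation problem on the nodes $x_j$ with multiplicities $k_j$: it is square of size $k$ and nonsingular because distinct nodes make that interpolation problem uniquely solvable, its determinant being a nonzero power product of the differences $x_i-x_j$. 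With $U_1$ invertible, replacing $U$ by $UU_1^{-1}=\binom{I_k}{U_2U_1^{-1}}$ gives a basis of the null space in reduced form: its $i$-th column is a null vector with a $1$ in position $i-1$, zeros in the remaining first $k$ positions, and arbitrary entries afterwards, which rewrites $c_i$ as a combination of $c_{k+1},\dots,c_n$; and any null vector vanishing on the first $k$ coordinates equals $U\bm{c}$ with $U_1\bm{c}=0$, forcing $\bm{c}=0$, so $c_{k+1},\dots,c_n$ are independent.

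I expect the main obstacle to be the bookkeeping in the second step, namely justifying cleanly that the intersection of the kernels retains precisely the confluent blocks of $g$ with the correct (minimal) multiplicities, not merely the right total dimension. A robust alternative that sidesteps this passes through the companion matrix $C$ of $P$ via the Barnett-type factorization $\bez_{\mathrm{St}}(P,Q_i)=\bez_{\mathrm{St}}(P,1)\,Q_i(C)^t$, where $\bez_{\mathrm{St}}(P,1)$ is invertible because $\deg P=n$: left multiplication by the invertible block-diagonal matrix $\mathrm{diag}(\bez_{\mathrm{St}}(P,1),\dots,\bez_{\mathrm{St}}(P,1))$ preserves every column relation, so the null space becomes $\bigcap_i\ker Q_i(C^t)=\ker g(C^t)$, whose description and dimension follow at once from $g=\gcd(P,Q_1,\dots,Q_r)$ together with the fact that the minimal polynomial of $C$ is $P$. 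Either route reduces the theorem to the nonsingularity of a confluent Vandermonde block, which is the only genuinely computational point.
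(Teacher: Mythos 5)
Your proposal is correct, but note that the paper offers no proof of this theorem at all: it defers to \cite{Diaz:BGCD:2002}, where the argument is essentially your ``robust alternative'' --- Barnett's original statements concern the stacked matrix of the blocks $Q_i(C)$, with $C$ the companion matrix of $P$, and are carried over to Bezout matrices through the factorization $\bez_{\mathrm{St}}(P,Q)=\bez_{\mathrm{St}}(P,1)\,Q(C)$ (transposes depending on conventions), $\bez_{\mathrm{St}}(P,1)$ being invertible because $\deg P=n$. So your fallback reproduces the cited proof, while your primary route is genuinely different and has the virtue of being internal to the paper: you reduce both claims to the invertibility of the top $k\times k$ block of a null-space basis, identify $\ker\mathbf{B}^P_{\mathrm{St}}=\bigcap_i\ker\bez_{\mathrm{St}}(P,Q_i)$ via Theorem~\ref{kernelmonomial}, and finish with the nonsingularity of a $k\times k$ confluent Vandermonde matrix. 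The bookkeeping you flag is harmless: all confluent vectors at the distinct roots of $P$, with multiplicities bounded by those in $P$, form a single linearly independent family of $n$ vectors (the full $n\times n$ confluent Vandermonde), each kernel is the span of a subfamily, and an intersection of spans of subfamilies of one fixed independent family is the span of the common subfamily, whose indices at $x_j$ run up to $\min_i$ of the multiplicities in $\gcd(P,Q_i)$, i.e.\ exactly the multiplicity in $g$. Two cautions, both cosmetic: the $x_j$ may lie in $\overline{\K}$, but both conclusions of the theorem are rank conditions and so descend to $\K$; and if some $Q_i$ is proportional to $P$, the hypotheses of Theorem~\ref{kernelmonomial} fail while its conclusion (kernel equal to $\K^n$, spanned by all confluent blocks of $P$) persists, so your identification survives. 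As for what each approach buys: yours needs only results already stated in the paper, whereas the companion-matrix route obtains the kernel identification for free ($\bigcap_i\ker Q_i(C^t)=\ker g(C^t)$, of dimension $\deg g$ because $C$ is nonderogatory with minimal polynomial $P$) at the price of importing the Barnett factorization, which the paper never states.
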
  

Finally, it is shown how to use the matrix
$\mathbf{B}^P_\mathrm{St}(Q_1,\ldots , Q_r)$ in order to get the coefficients
of the greatest common divisor of $P(t),Q_1(t),\ldots , Q_r(t)$.

\begin{theorem} \label{coeficientes}
Following the same notation as in Theorem \ref{independencia}, if
\[
 c_{k-i}=\sum\limits_{j=k+1}^{n}h_{k-i}^{j}c_{j},\quad i=0,\ldots ,k-1,
\]
 and
\[
\left(
\begin{array}{c} d_{0} \\ {d}_{1} \\ {d}_{2} \\
\vdots \\ {d}_{k}
\end{array}
\right) =d_{0}\left(
\begin{array}{c} 1 \\ h_{k}^{k+1} \\ h_{k-1}^{k+1} \\
\vdots \\ h_{1}^{k+1}
\end{array}
\right) \, ,
\]
then
\[
 D={d}_{0}{t}^{k}+{d}_{1}{t}^{k-1}+\ldots +{d}_{k-1}t+{d}_{k}
\]
is a greatest common divisor for the polynomials $P(t),Q_1(t),\ldots ,Q_r(t)$.
\end{theorem}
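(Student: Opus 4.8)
The plan is to compute the right null space of $\mathbf{B}^P_{\mathrm{St}}(Q_1,\ldots,Q_r)$ explicitly and then read the coefficients of the gcd off a single instance of a linear recurrence. Write $N$ for the space of vectors $\lambda=(\lambda_1,\ldots,\lambda_n)^T$ with $\sum_{j=1}^n\lambda_j c_j=0$; these are exactly the column dependencies, and since $\mathbf{B}^P_{\mathrm{St}}$ is the vertical stack of the $\bez_{\mathrm{St}}(P,Q_i)$, we have $N=\bigcap_{i=1}^r\ker\bez_{\mathrm{St}}(P,Q_i)$. First I would apply Theorem~\ref{kernelmonomial} to each pair $(P,Q_i)$: the kernel of $\bez_{\mathrm{St}}(P,Q_i)$ is spanned by the (confluent) Vandermonde columns attached to the common roots of $P$ and $Q_i$, that is, to the roots of $\gcd(P,Q_i)$, each with the multiplicity it has in that gcd. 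Over all roots of $P$ (to full multiplicity) these confluent Vandermonde columns are linearly independent, so each $\ker\bez_{\mathrm{St}}(P,Q_i)$ is the span of a subfamily of one fixed independent family, and the intersection $N$ is spanned by exactly the columns common to all of them, namely those attached to the roots of $D:=\gcd(P,Q_1,\ldots,Q_r)$ with the minimal multiplicities. Hence $\dim N=\deg D=k$, which already recovers the rank statement.

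The second step is to characterize $N$ intrinsically. Writing $D(t)=\delta_0+\delta_1 t+\cdots+\delta_k t^k$ with $\delta_k\neq0$, I would check that every confluent Vandermonde column at a root of $D$ satisfies the linear recurrence with characteristic polynomial $D$, i.e.
\[
\sum_{l=0}^{k}\delta_l\,v_{m+l+1}=0,\qquad m=0,\ldots,n-k-1,
\]
for $v=(v_1,\ldots,v_n)^T$. The solution space of this recurrence has dimension $k$, since its first $k$ entries are free and (because $\delta_k\neq0$) determine all the rest; as it contains the $k$-dimensional space $N$, the two coincide. Thus $N$ is precisely the set of vectors whose coordinates obey the recurrence associated to $D$.

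Finally I would extract the coefficients. By Theorem~\ref{independencia} the projection of $N$ onto its first $k$ coordinates is an isomorphism, so for each $i\in\{1,\ldots,k\}$ there is a unique $\mu^{(i)}\in N$ with $\mu^{(i)}_i=1$ and $\mu^{(i)}_p=0$ for $p\le k$, $p\neq i$; the relation $c_i=\sum_{j=k+1}^n h_i^j c_j$ says exactly that $\mu^{(i)}_j=-h_i^j$ for $j>k$, so that $h_i^{k+1}=-\mu^{(i)}_{k+1}$. Evaluating the recurrence at $m=0$ for $\mu^{(i)}$ gives $\delta_k\mu^{(i)}_{k+1}+\delta_{i-1}=0$, whence $h_i^{k+1}=\delta_{i-1}/\delta_k$. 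Substituting into the assembled vector $(d_0,\ldots,d_k)=d_0(1,h_k^{k+1},\ldots,h_1^{k+1})^T$ gives $d_m=d_0\,\delta_{k-m}/\delta_k$, and reindexing $l=k-m$ turns $D=d_0t^k+d_1t^{k-1}+\cdots+d_k$ into $(d_0/\delta_k)\sum_{l=0}^k\delta_l t^l$, a nonzero scalar multiple of $\gcd(P,Q_1,\ldots,Q_r)$, which is therefore itself a gcd.

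The main obstacle is the first step: converting Theorem~\ref{kernelmonomial}, which describes each kernel separately through Vandermonde-type blocks, into a clean description of the intersection $N$, and in particular handling repeated roots of $P$ so that the confluent blocks intersect with the correct minimal multiplicities. Once $N$ is identified with the recurrence solution space of $D$, the passage to the coefficients is essentially a one-line computation, the only delicate point being the reversal of the coefficient order built into the statement's assembly of $(d_0,\ldots,d_k)$.
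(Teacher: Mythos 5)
Your proposal is correct, but it takes a genuinely different route from the paper's: the paper in fact gives no proof of Theorem \ref{coeficientes}, deferring to \cite{Diaz:BGCD:2002}, where the result is obtained by transporting Barnett's original companion-matrix theorems to Bezout matrices through a factorization of $\bez_{\mathrm{St}}(P,Q)$ in terms of $Q$ evaluated at the companion matrix of $P$. You instead bootstrap everything from the Heinig--Rost kernel description (Theorem \ref{kernelmonomial}): over $\overline{\K}$ each $\ker \bez_{\mathrm{St}}(P,Q_i)$ is the span of a subfamily of the single linearly independent confluent-Vandermonde family attached to the roots of $P$ with full multiplicity, and since an intersection of spans of subfamilies of one independent family is the span of the common subfamily, the null space $N$ of the stacked matrix is spanned by the columns attached to the roots of $D=\gcd(P,Q_1,\ldots,Q_r)$ with their minimal multiplicities, giving $\dim N=k$. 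Your identification of $N$ with the solution space of the order-$k$ recurrence whose characteristic polynomial is $D$ is sound (the derivative columns satisfy it because the $p$-th derivative of $t^{m}D(t)$ vanishes at a root of multiplicity exceeding $p$, and the dimension count forces equality), and the extraction $h_i^{k+1}=\delta_{i-1}/\delta_k$ from the $m=0$ instance of the recurrence applied to the dependency vectors $\mu^{(i)}$ checks out, including the coefficient reversal in the assembly of $(d_0,\ldots,d_k)$. It is worth noting that your argument runs the paper's own logic in reverse: in Section \ref{sectionnullspacesgcd} the paper \emph{deduces} the null-space basis $\F$ of Proposition \ref{gcdKer} \emph{from} Theorem \ref{coeficientes}, whereas you derive the null-space structure independently from Theorem \ref{kernelmonomial} and obtain Theorem \ref{coeficientes} as a consequence. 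What your route buys is a proof self-contained within the results quoted in this paper, with no companion matrices; what it costs is the detour through $\overline{\K}$ and the nonsingularity of the confluent Vandermonde matrix.

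Three small points you should tighten, none fatal: (i) Theorem \ref{kernelmonomial} is stated under the standing assumption that the two polynomials are neither null nor proportional, so the degenerate $Q_i$ (null or proportional to $P$, where $\bez_{\mathrm{St}}(P,Q_i)=0$ and the kernel is all of $\K^n$) need a one-line separate remark, after which your intersection argument still goes through; (ii) the Vandermonde spanning vectors live in $\overline{\K}^n$ while $N$ is a $\K$-space, so the identification with the recurrence solution space should be made over $\overline{\K}$ and then descended to $\K$ by the invariance of kernel dimension under field extension; (iii) the final conclusion needs $d_0\neq 0$, an implicit hypothesis already present in the statement itself.
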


The proof of these results can be found in \cite{Diaz:BGCD:2002}. Observe that if $d_0=1$, then the monic greatest common divisor  is equal to
$t^k+h_k^{k+1}{t}^{k-1}+\ldots +{h}_2^{k+1}t+ h_1^{k+1} $.


\subsection{Barnett's Method through $\mathbf{{Bez}_{\Phi}(P,Q)}$} \label{barnettsect}

We are going to generalize in the following lines the results presented in the previous subsection. Since
\[
\mathbf{B}^P_\Phi(Q_1,\ldots , Q_r)=\left(\begin{array}{ccc}
   \mathbf{P}_{\mathrm{St} \rightarrow\Phi}  &  &  \\
     & \ddots &  \\
     &  & \mathbf{P}_{\mathrm{St} \rightarrow\Phi}
\end{array}\right)\mathbf{B}^P_\mathrm{St} (Q_1,\ldots , Q_r) \mathbf{P}_{\mathrm{St}\rightarrow\Phi}^t \, ,
\]
it is obvious that $\hbox{\rm rank}(
\mathbf{B}^P_\mathrm{St}(Q_1,\ldots,Q_r)) = \hbox{\rm rank}(
\mathbf{B}^P_\Phi(Q_1,\ldots,Q_r))$. Moreover, Theorem \ref{independencia} and Theorem \ref{coeficientes} can be reformulated with the matrix
$$
\T=\mathbf{B}^P_\Phi(Q_1,\ldots , Q_r) \mathbf{P}_{\mathrm{St}\rightarrow\Phi}^{-t} =\mathbf{B}^P_\Phi(Q_1,\ldots , Q_r) \mathbf{P}_{\Phi\rightarrow \mathrm{St}}^t
$$ as follows.

\begin{cor} \label{barnettgeneral}
Let $n-k$ be the rank of $\mathbf{B}^P_\Phi(Q_1,\ldots , Q_r)$ and let $t_1,\ldots ,t_n$ be the columns of the matrix
$\T$. Then  if
\[
 t_{k-i}=\sum\limits_{j=k+1}^{n}h_{k-i}^{j}t_{j},\quad i=0,\ldots ,k-1,
\]
 and
\[
\left(
\begin{array}{c} d_{0} \\ {d}_{1} \\ {d}_{2} \\
\vdots \\ {d}_{k}
\end{array}
\right) =d_{0}\left(
\begin{array}{c} 1 \\ h_{k}^{k+1} \\ h_{k-1}^{k+1} \\
\vdots \\ h_{1}^{k+1}
\end{array}
\right) \, ,
\]
then
\[
 D={d}_{0}{t}^{k}+{d}_{1}{t}^{k-1}+\ldots +{d}_{k-1}t+{d}_{k}
\]
is a greatest common divisor for the polynomials $P(t),Q_1(t),\ldots ,Q_r(t)$.
\end{cor}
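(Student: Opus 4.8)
The plan is to reduce Corollary \ref{barnettgeneral} to Theorem \ref{coeficientes} by exhibiting $\T$ as an invertible \emph{left}-transform of $\mathbf{B}^P_\mathrm{St}(Q_1,\ldots,Q_r)$, and then invoking the elementary fact that left multiplication by an invertible matrix preserves every linear dependence relation among columns, coefficients included. The whole statement is a ``transport of structure'' along this invertible factor.

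First I would substitute the congruence factorization of $\mathbf{B}^P_\Phi$ displayed just before the statement into the definition of $\T$. Writing $\mathbf{M}=\mathrm{diag}(\mathbf{P}_{\mathrm{St}\rightarrow\Phi},\ldots,\mathbf{P}_{\mathrm{St}\rightarrow\Phi})$ for the block-diagonal factor, I get
$$
\T=\mathbf{M}\,\mathbf{B}^P_\mathrm{St}(Q_1,\ldots,Q_r)\,\mathbf{P}_{\mathrm{St}\rightarrow\Phi}^t\,\mathbf{P}_{\Phi\rightarrow\mathrm{St}}^t=\mathbf{M}\,\mathbf{B}^P_\mathrm{St}(Q_1,\ldots,Q_r),
$$
because $\mathbf{P}_{\mathrm{St}\rightarrow\Phi}^t\mathbf{P}_{\Phi\rightarrow\mathrm{St}}^t=(\mathbf{P}_{\Phi\rightarrow\mathrm{St}}\mathbf{P}_{\mathrm{St}\rightarrow\Phi})^t=\mathbf{I}$, the two change-of-basis matrices being mutual inverses. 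Hence, if $c_1,\ldots,c_n$ denote the columns of $\mathbf{B}^P_\mathrm{St}(Q_1,\ldots,Q_r)$ as in Theorems \ref{independencia} and \ref{coeficientes}, the columns of $\T$ are exactly $t_i=\mathbf{M}c_i$.

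Then I would exploit that $\mathbf{M}$ is invertible, so $c\mapsto\mathbf{M}c$ is a linear isomorphism: it preserves rank, carries independent families to independent families, and turns each dependence relation into the \emph{same} relation. Concretely, $\mathrm{rank}(\T)=\mathrm{rank}(\mathbf{B}^P_\mathrm{St})=n-k$ (already noted in the text); the last $n-k$ columns $t_{k+1},\ldots,t_n$ are independent precisely because $c_{k+1},\ldots,c_n$ are, by Theorem \ref{independencia}; and from $c_{k-i}=\sum_{j=k+1}^n h_{k-i}^j c_j$, applying $\mathbf{M}$ gives $t_{k-i}=\sum_{j=k+1}^n h_{k-i}^j t_j$ with identical scalars $h_{k-i}^j$. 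This is the one step meriting care, and it is immediate once the cancellation above is in hand.

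Finally, since the coefficients $h_{k-i}^j$ read off from the columns of $\T$ coincide with those read off from the columns of $\mathbf{B}^P_\mathrm{St}$, in particular the entries $h_k^{k+1},\ldots,h_1^{k+1}$ feeding the vector $(d_0,\ldots,d_k)^t$ are the same in both settings. Theorem \ref{coeficientes}, applied to $\mathbf{B}^P_\mathrm{St}(Q_1,\ldots,Q_r)$, then asserts that $D=d_0t^k+\cdots+d_{k-1}t+d_k$ is a greatest common divisor of $P,Q_1,\ldots,Q_r$, which is exactly the conclusion sought. I do not expect a genuine obstacle here: the entire argument rests on the cancellation $\mathbf{P}_{\mathrm{St}\rightarrow\Phi}^t\mathbf{P}_{\Phi\rightarrow\mathrm{St}}^t=\mathbf{I}$ and on the invariance of column dependencies under an invertible left factor, so the main thing to get right is simply tracking which side the surviving factor acts on.
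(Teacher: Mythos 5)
Your proposal is correct and follows essentially the same route as the paper: the authors justify the corollary precisely by the displayed block-diagonal congruence and the definition $\T=\mathbf{B}^P_\Phi(Q_1,\ldots,Q_r)\,\mathbf{P}_{\Phi\rightarrow\mathrm{St}}^t$, which after the cancellation $\mathbf{P}_{\mathrm{St}\rightarrow\Phi}^t\mathbf{P}_{\Phi\rightarrow\mathrm{St}}^t=\mathbf{I}$ yields $\T=\mathbf{M}\,\mathbf{B}^P_\mathrm{St}(Q_1,\ldots,Q_r)$, so that Theorems \ref{independencia} and \ref{coeficientes} transfer verbatim under the invertible left factor. You merely make explicit (including the uniqueness of the $h_{k-i}^j$ via independence of the last $n-k$ columns) what the paper leaves as an immediate reformulation.
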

We would like to remark that Corollary \ref{barnettgeneral} provides a new way to obtain the coefficients of $\gcd(P,Q_1,\ldots,Q_r)$ in the monomial basis from the Bezout matrix in an arbitrary basis.

\subsection{Nullspaces and gcd's} \label{sectionnullspacesgcd}
We give next the other method to compute the greatest common divisor.
\begin{prop}\label{gcdKer}
Suppose  that $\gcd(P,Q_1,\ldots,Q_r)=t^{k}+{d}_{1}{t}^{k-1}+\ldots +{d}_{k}$. Let  $\mathbf{N}\in\mathcal M_{n\times k}(\mathbb K)$ be a matrix whose columns form a basis of the null space of $\mathbf{B}^P_\mathrm{St}(Q_1,\ldots,Q_r)$, and let $\mathbf{Z}$ be the submatrix of $\mathbf{N}$ defined by the first $k+1$ rows.
Then we have
\begin{equation}\label{sistemagral}
\left( d_k  ,\ldots     ,d_1,1 \right) \mathbf{Z} = (0,\ldots,0),
\end{equation}
\end{prop}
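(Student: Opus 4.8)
The plan is to exploit the explicit root-based description of the null space provided by Theorem~\ref{kernelmonomial}, reducing the matrix identity~(\ref{sistemagral}) to the vanishing of successive derivatives of the gcd at its own roots.

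First I would identify the null space of $\mathbf{B}^P_{\mathrm{St}}(Q_1,\ldots,Q_r)$. Since this matrix is the vertical stack of the blocks $\bez_{\mathrm{St}}(P,Q_i)$, its null space is the intersection $\bigcap_{i=1}^{r}\ker\bez_{\mathrm{St}}(P,Q_i)$. Write $G(t):=\gcd(P,Q_1,\ldots,Q_r)=t^{k}+d_1t^{k-1}+\cdots+d_k$, let $x_j$ denote its distinct roots, and let $\mu_j$ be the multiplicity of $x_j$ in $G$. Since $G\mid\gcd(P,Q_i)$, each $x_j$ is a root of every $\gcd(P,Q_i)$ of multiplicity at least $\mu_j$, so by Theorem~\ref{kernelmonomial} the confluent vectors
$$
w_{j,m}=\left.\frac{d^{\,m}}{dx^{m}}\bigl(1,x,x^{2},\ldots,x^{n-1}\bigr)^{T}\right|_{x=x_j},\qquad 0\le m\le\mu_j-1,
$$
lie in $\ker\bez_{\mathrm{St}}(P,Q_i)$ for every $i$, hence in the intersection. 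These $\sum_j\mu_j=k$ vectors are linearly independent, and the rank/degree formula stated above gives $\dim\ker\mathbf{B}^P_{\mathrm{St}}=n-\mathrm{rank}=k$; therefore the $w_{j,m}$ form a basis of the null space.

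Since~(\ref{sistemagral}) involves only the first $k+1$ coordinates of each null space vector, and every column of $\mathbf{N}$ is a linear combination of the $w_{j,m}$, it suffices to check that the row $(d_k,\ldots,d_1,1)$ annihilates the first $k+1$ entries of each $w_{j,m}$. Reading row $i+1$ of $\mathbf{Z}$ as the $x^{i}$-coordinate, the key computation is that this dot product equals
$$
\sum_{i=0}^{k}d_{k-i}\,\left.\frac{d^{\,m}}{dx^{m}}x^{i}\right|_{x=x_j}=\left.\frac{d^{\,m}}{dx^{m}}\Bigl(\sum_{i=0}^{k}d_{k-i}\,x^{i}\Bigr)\right|_{x=x_j}=G^{(m)}(x_j),
$$
where I set $d_0=1$ so that $\sum_{i=0}^{k}d_{k-i}x^{i}=G(x)$. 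Because $x_j$ has multiplicity $\mu_j$ in $G$, we have $G^{(m)}(x_j)=0$ for every $0\le m\le\mu_j-1$, and the identity $(d_k,\ldots,d_1,1)\mathbf{Z}=(0,\ldots,0)$ follows.

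The main obstacle is the first step: justifying rigorously that the intersection $\bigcap_i\ker\bez_{\mathrm{St}}(P,Q_i)$ is spanned exactly by the confluent vectors at the roots of the \emph{overall} gcd, with the correct multiplicities. This requires combining Theorem~\ref{kernelmonomial}, applied to each pair $(P,Q_i)$, with the dimension count supplied by the rank formula, in order to rule out any extra vectors beyond the $w_{j,m}$; once the basis is pinned down, the derivative calculation in the third paragraph is routine.
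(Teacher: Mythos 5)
Your proof is correct, but it takes a genuinely different route from the paper's. The paper stays entirely inside Barnett's framework: by Theorem~\ref{independencia} each of the first $k$ columns of $\mathbf{B}^P_\mathrm{St}(Q_1,\ldots,Q_r)$ is a combination $c_{k-i}=\sum_{j=k+1}^{n}h_{k-i}^{j}c_{j}$ of the last $n-k$ independent columns, so the vectors with $-1$ in one of the first $k$ slots and the $h$'s below assemble into a triangular matrix $\F$ whose columns visibly form a basis of the null space; writing $\mathbf{N}=\F\,\mathbf{S}$ with $\mathbf{S}$ nonsingular, the relation~(\ref{sistemagral}) is read off from the first $k+1$ rows of $\F$, and Theorem~\ref{coeficientes} identifies $(h_1^{k+1},\ldots,h_k^{k+1})$ with $(d_k,\ldots,d_1)$. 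You instead use the Heinig--Rost structure theorem (Theorem~\ref{kernelmonomial}) pairwise: $\ker\mathbf{B}^P_\mathrm{St}=\bigcap_i\ker\bez_\mathrm{St}(P,Q_i)$ contains the $k$ confluent vectors $w_{j,m}$ attached to the roots of the overall gcd, and the dimension count $\dim\ker=k$ from the rank formula pins these down as a basis --- that step is complete as written, since $k$ independent vectors in a $k$-dimensional space are a basis --- after which~(\ref{sistemagral}) reduces to the Hermite-type conditions $G^{(m)}(x_j)=0$ for $m\le\mu_j-1$. Each approach buys something: the paper's argument is root-free, stays over $\K$, and never leaves linear algebra on the columns of the Bezout stack, whereas yours requires passing to $\overline{\K}$ (the roots $x_j$ need not lie in $\K$, though since $\mathbf{N}$ and $\mathbf{Z}$ have entries in $\K$ the identity proved over $\overline{\K}$ descends) and implicitly uses the standing assumption that no $Q_i$ is null or proportional to $P$ so that Theorem~\ref{kernelmonomial} applies to each pair (if it were, that kernel is all of $\K^n$ and the intersection argument still goes through). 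In exchange, your route is more conceptual --- it explains the linear system as vanishing of the gcd and its derivatives at its own roots --- and, as a by-product, it establishes the several-polynomial analogue of Theorem~\ref{kernelmonomial} for $\mathbf{B}^P_\mathrm{St}$, a structure result the paper only alludes to in its conclusions.
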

\begin{proof} 
Observe that the linear combinations introduced in Theorem \ref{coeficientes} define  a basis of the null space of $\mathbf{B}^P_\mathrm{St}(Q_1,\ldots,Q_r)$.
More specifically and following the notation of Theorem \ref{coeficientes}, the columns of the following triangular matrix $\F$ define a basis of the null space of $\mathbf{B}^P_\mathrm{St}(Q_1,\ldots,Q_r)$,
$$
\F=\left(\begin{array}{cccc}
-1&   &   &   \\
0 & -1 &   &   \\
\vdots&\vdots   &  \ddots &   \\
0&  0 &    &  -1 \\
h_1^{k+1}& h_2^{k+1}  &   &   h_k^{k+1}\\
\vdots& \vdots   &   &  \vdots\\
h_1^n& h_2^{n}  &   & h_k^n
\end{array}\right)\, .
$$
Obviously,
\begin{equation}
\left( h_1^{k+1}  ,\ldots     ,h_k^{k+1}, 1 \right)  \left(\begin{array}{cccc}
-1&   &   &   \\
0 & -1 &   &   \\
\vdots&\vdots   &  \ddots &   \\
0&  0 &    &  -1 \\
h_1^{k+1}& h_2^{k+1}  &   &   h_k^{k+1}
\end{array}\right) = (0,\ldots,0).
\end{equation}
On the other hand, if $\mathbf{N}$ is a matrix whose columns form a basis of the null space of $\mathbf{B}^P_\mathrm{St}(Q_1,\ldots,Q_r)$, then there exists a nonsingular matrix $\mathbf{S}\in \mathcal M_{k\times k}(\mathbb R)$ such that  $\mathbf{N}=\mathbf{F\,S}$. Therefore, if $\mathbf{Z}$ is the submatrix of $\mathbf{N}$ defined by the first $k+1$ rows, then
\begin{equation} \label{firskrowsofZindependientes}
\mathbf{Z}=\left(\begin{array}{cccc}
-1&   &   &   \\
0 & -1 &   &   \\
\vdots&\vdots   &  \ddots &   \\
0&  0 &    &  -1 \\
h_1^{k+1}& h_2^{k+1}  &   &   h_k^{k+1}
\end{array}\right)  \mathbf{S}.
\end{equation}
Thus $\left( h_1^{k+1}  ,\ldots     ,h_k^{k+1}, 1 \right) \mathbf{Z} = (0,\ldots,0)$. Since $\left( h_1^{k+1}  ,\ldots     ,h_k^{k+1}, 1 \right) =\left( d_k  ,\ldots     ,d_1,1 \right) $, the proposition is proved.
\end{proof}
Hence, the greatest common divisor of several polynomials can be computed by solving the linear system (\ref{sistemagral}). The generalization of  Proposition \ref{gcdKer} is as follows.
\begin{cor}\label{nul} Let  $\mathbf{N}^\Phi\in\mathcal M_{n\times k}(\mathbb K)$ be a matrix whose columns define a basis of the null space $\mathbf{B}^P_\Phi(Q_1,\ldots,Q_r)$.
Thus, if $\mathbf{Z^\Phi}$ denotes the matrix defined by the first $k+1$ rows of $\mathbf{P}^t_{\mathrm{St} \rightarrow \Phi} \mathbf{N^\Phi}$, then
\begin{equation}\label{sistemaphi}
\left( d_k  ,\ldots     ,d_1,1 \right) \mathbf{Z^\Phi} = (0,\ldots,0).
\end{equation}
\end{cor}

 \begin{example} Supposse that we have two polynomials $P(t)$ and $Q(t)$ in $\P_4$ expressed in the Hermite interpolation basis denoted by $\mathbf{H}.$ i.e., the polynomials are given by values. As input, the nodes are
$\boldsymbol{\tau}=[-1,3,4]$,
and the confluencies are $ \boldsymbol{s}=[2,2,1]$. The values of $P(t)$ and $Q(t)$ together with their derivatives are
$$\boldsymbol{p}=[6, -11, 26, 53, 126],\quad  \boldsymbol{q}=[-12, 16, 4, 8, 18].$$
In this case,  the confluent Vandermonde matrix is the transformation matrix $\mathbf{P}_{\mathrm{St} \rightarrow \mathbf{H}} $, the confluent Bezout matrix (in \cite{eaca14Hermite} we describe how to directly compute it from the values) is  given by
$$\mathbf{Bez}_\mathbf{H}(P,Q)=
\left[ \begin {array}{cccc} 36&-102&-84&-150\\ \noalign{\medskip}-102
&181&94&173\\ \noalign{\medskip}-84&94&4&14\\ \noalign{\medskip}-150&
173&14&37\end {array} \right] ,
$$
and the null space is spanned by the vectors $ ( -22,-21,0,9)$ and $(-13,-12,9,0)$. Following the notation of Corollary \ref{nul}, $k=2$  and the matrix $\mathbf{Z^\mathbf{H}}$ is given by
$$
\mathbf{Z^\mathbf{H}}=\left[ \begin {array}{cc} -22&-4\\ \noalign{\medskip}10&28
\\ \noalign{\medskip}74&92\end {array} \right].
$$
By solving the linear system equations (\ref{sistemaphi}), we obtain that $\left( d_2   ,d_1,1 \right) =(2,-3,1)$. Thus the greatest common divisor of $P(t)$ and $Q(t)$ is equal to ${x}^{2}-3\,x+2$. Observe that we have not had to convert the polynomials to the monomial basis at any moment.
\end{example}

\section{Conclusions and future work} \label{conclusions}
Given a set of polynomials $P(t),Q_1(t), \ldots, Q_r(t)$ represented in a polynomial basis $\Phi$, this manuscript introduces, on the one hand, the structure of the null space of the Bezout matrix $\mathbf{B}^P_\Phi(Q_1(t), \ldots, Q_r(t)) $, easily deduced from Theorem \ref{nullspacegeneral}. 

On the other hand,  Corollary \ref{barnettgeneral} in Section \ref{barnettsect} and Corollary \ref{nul} in Section \ref{sectionnullspacesgcd} present two different methods for computing their gcd from $\mathbf{B}^P_\Phi(Q_1(t) \ldots, Q_r(t))$. We estimate that, in most cases, this methodology is better than converting first the polynomials to the monomial basis, and then computing their gcd. For example, it is known that when the data is given by values (Lagrange interpolation data), working directly in the Lagrange basis is usually better than working in other basis, even in the Bernstein one (see for example \cite{Berrut:BLI:2004, Corless:LBB:2004, shakoori:2007}).  We are currently comparing our methodology with others (see for example \cite{BernsteinBez} for the Bernstein basis and \cite{Cheng:2009:CPL:1504347.1504353} for the Lagrange Basis), analyzing the numerical behaviour for each basis.


\section{Acknowledgments}
The authors are partially supported by the Spanish ``Ministerio de Econom\'ia y Competitividad" and by the European Regional Development Fund (ERDF), under the Project MTM2011-25816-C02-02.

\bibliographystyle{plain}
\bibliography{bez_lin}

\end{document}